\newtheorem{theorem}{Theorem}
\newtheorem{lemma}[theorem]{Lemma}
\theoremstyle{remark}\newtheorem{remark}[theorem]{Remark}
\begin{document}

\title{The Muckenhoupt $A_{\infty}$ class as a metric space and continuity of weighted estimates}
\author{Nikolaos Pattakos and Alexander Volberg} \address{Department of Mathematics, Michigan State University, East
Lansing, MI 48824, USA}

\subjclass{30E20, 47B37, 47B40, 30D55.} 
\keywords{Key words: Calder\'on--Zygmund operators, $A_2$ weights,
   interpolation.}
\date{}

\begin{abstract}
{We show how the $A_{\infty}$ class of weights can be considered as a metric space. As far as we know this is the first time that a metric $d_{*}$ is considered on this set. We use this metric to generalize the results obtained in \cite{NV}. Namely, we show that for any Calder\'on-Zygmund operator $T$ and an $A_{p}$, $1<p<\infty$, weight $w_{0}$, the numbers $\|T\|_{L^{p}(w)\rightarrow L^{p}(w)}$ converge to $\|T\|_{L^{p}(w_{0})\rightarrow L^{p}(w_{0})}$ as $d_{*}(w,w_{0})\to 0$. We also find the rate of this convergence and prove that is sharp.}
\end{abstract}

\maketitle

\begin{section}{introduction and useful results}

The main purpose of this paper, is to define a natural metric structure on the classical Muckenhoupt $A_{p}$ classes, and generalize a continuity result obtained in \cite{NV}. Such continuity results have been coming up recently in connection with PDE with random coefficients and continuity of norms of Calder\'on-Zygmund operators. For example, the continuity at $w=1$, was used in \cite{CS}. 

The metric $A_{p}$ classes will be considered in section 2, where we study many properties of these new spaces, and the main theorem \ref{main}, in section 3. Before we state and prove the main theorems in sections 2 and 3, we need some definitions and some already known results about the weighted theory and it's relation with the $BMO$ space. 

We are going to work with functions $w\in L^{1}_{loc}(\mathbf R^{n})$ that are positive almost everywhere. Functions like these are known as weights. The celebrated $A_{p}$ classes of weights are defined in the following way:

For $1<p<\infty$, we say that $w\in A_{p}$ if for all cubes $Q$ in $\mathbf R^{n}$ we have that ($[w]_{A_{p}}$ is called the $A_{p}$ characteristic of the weight):

$$[w]_{A_{p}}:=\sup_{Q}\Big(\frac1{|Q|}\int_{Q}w\Big)\Big(\frac1{|Q|}\int_{Q}w^{1-p^{'}}\Big)^{p-1}<\infty,$$
where $p^{'}$ is the conjugate exponent to $p$, i.e. $\frac1{p}+\frac1{p^{'}}=1$. 

The class of $A_{1}$ weights consists of those $w$ such that there is a positive constant $c$ with the property:

$$Mw(x)\leq cw(x)$$
for almost every $x$ in $\mathbf R^{n}$, where $M$ is the Hardy-Littlewood maximal function. The smallest such constant is denoted by $[w]_{A_{1}}$ and is called the $A_{1}$ characteristic.

We define the class of $A_{\infty}$ weights as:

$$[w]_{A_{\infty}}:=\sup_{Q}\Big(\frac{\frac1{|Q|}\int_{Q}w}{\exp(\frac1{|Q|}\int_{Q}\log w)}\Big)<\infty.$$

It is really easy to see that any $A_{p}$ weight is actually an $A_{\infty}$ weight, and that we have the estimate $[w]_{A_{\infty}}\leq [w]_{A_{p}}$. It is also true that any $A_{\infty}$ weight is an $A_{p}$ weight for some $1<p<\infty$. This means that we have the equality:

$$A_{\infty}=\bigcup_{1<p<\infty} A_{p}.$$
Another nice property is that for $1\leq p\leq q\leq\infty$ we have $A_{1}\subset A_{p}\subset A_{q}\subset A_{\infty}$, where the inclusions here are strict. All of these sets are different for different values of $p$ and $q$. 

The space of $BMO$ functions in $\mathcal R^n$, consists of locally integrable functions $f$ such that the norm
$$\|f\|_{*}=\sup_{Q}\frac1{|Q|}\int_{Q}|f-f_{Q}|dx$$
if finite. The $BMO$ space and the $A_{\infty}$ space, have many nice properties. First of all, if $f$ is a $BMO$ function then for any number $\lambda\in(0,\frac{c}{\|f\|_{*}}]$, the function $e^{\lambda f}$ is an $A_{p}$ weight, $1<p<\infty$, where the constant $c$ depends on $p$ and the dimension $n$. Secondly, for small $BMO$ norm, the $A_{p}$ norm of the weight $e^{\lambda f}$ is bounded by the number $2$ for example (see e.g.  \cite{GCRF}).

A subset of $BMO$ that appears in many applications is $BLO$. It stands for the functions of bounded lower oscillation. A function $f\in L^{1}_{loc}(\mathbf R^{n})$ is said to belong in $BLO$ if there is a positive constant $c$ such that:

$$\frac{1}{|Q|}\int_{Q}f-\inf_{x\in Q}f(x)\leq c$$
for all cubes $Q$, where the infimum is understand as the essential infimum. It can be proved that for any $w\in A_{1}$, the function $\log w$ is in $BLO$. Also if a function $f\in BLO$ then for sufficiently small $\lambda>0$ the function $e^{\lambda f}\in A_{1}$. The reference for all these results is \cite{GCRF}.

For the proofs of our theorems, interpolation is going to play a really important role and for this reason we need some preliminary results on this subject as well. In the following $(X, \mathcal M, \mu)$ and $(Y,\mathcal N, \nu)$ will denote measure spaces. Suppose $T$ is an operator of a class of functions on $X$ into a class of functions on $Y$. $T$ is called a sub-linear operator, if it satisfies the following properties:\newline
i)If $f=f_{1}+f_{2}$ and $Tf_{1}, Tf_{2}$ are defined then $Tf$ is defined,\newline
ii)$|T(f_{1}+f_{2})|\leq|Tf_{1}|+|Tf_{2}|$, $\mu$ almost everywhere,\newline
iii)For any scalar $k$, we have $|T(kf)|=|k||Tf|$, $\mu$ almost everywhere.\newline
Let $p,q\geq1$ be two real numbers. We say that $T$ is of type $(p,q)$, if $T$ is defined for all functions $f$ in $L^{p}(X,\mathcal M,\mu)$ and there exists a positive number, $K$, independent of $f$, such that
$$\|Tf\|_{q,\nu}\leq K\|f\|_{p,\mu}$$ 
where
$$\|Tf\|_{q,\nu}=\Big(\int_{Y}|Tf|^{q}d\nu\Big)^{\frac{1}{q}}$$
and
$$\|f\|_{p,\mu}=\Big(\int_{X}|f|^{p}d\mu\Big)^{\frac1{p}}.$$
Let $\mu_{0},\mu_{1}$ be two measures for $(X,\mathcal M)$. If we define the measure $\mu=\mu_{0}+\mu_{1}$, then $\mu_{0},\mu_{1}$ are each absolutely continuous with respect to $\mu$. Thus, by the Radon-Nikodym theorem, there exists two functions, $\alpha_{0},\alpha_{1}$ such that for any $E\in\mathcal M$,

$$\mu_{j}(E)=\int_{E}\alpha_{j}d\mu$$
where $j=0,1$. In the following we will assume that $\alpha_{0},\alpha_{1}$ are never zero. This is equivalent to asserting that the sets of measure zero with respect to $\mu_{j}$, $j=0,1$, are the same as the sets of measure zero with respect to $\mu$. Thus, in the various measure spaces that we will consider, the equivalence classes of functions will be the same.

Let $0\leq s\leq1$, and define the measure $\mu_{s}$ on $X$ by

$$\mu_{s}(E)=\int_{E}\alpha_{0}^{1-s}\alpha_{1}^{s}d\mu,$$
for each $E\in\mathcal M$. Also assume, that we have two measures $\nu_{0},\nu_{1}$ on $\mathcal N$, and define the measures $\nu_{r}$, for $0\leq r\leq1$, just as we did for $\mu_{s}$ above.\newline
Given any real numbers $1\leq p_{0},p_{1}, q_{0}, q_{1}$ and any $0\leq t\leq1$, we define $p_{t}, q_{t}, s(t), r(t)$ as follows:
$$\frac{(1-t)p_{t}}{p_{0}}+\frac{tp_{t}}{p_{1}}=1, \frac{(1-t)q_{t}}{q_{0}}+\frac{tq_{t}}{q_{1}}=1$$
$$s(t)=\frac{(tp_{t})}{p_{1}}, r(t)=\frac{(tq_{t})}{q_{1}}.$$
We have the following theorem by \cite{SW}:

\begin{theorem}
\label{th1}
Suppose that $T$ is a sub-linear operator satisfying
$$\|Tf\|_{q_{j},\nu_{j}}\leq K_{j}\|f\|_{p_{j},\mu_{j}}$$
for all $f\in L^{p_{j}}(X,\mathcal M,\mu_{j})$, $j=0,1$. Then, for $0\leq t\leq1$, we have
$$\|Tf\|_{q_{t},\nu_{r(t)}}\leq K_{0}^{1-t}K_{1}^{t}\|f\|_{p_{t},\mu_{s(t)}}$$
for all $f\in L^{p_{t}}(X,\mathcal M, \mu_{s(t)})$.
\end{theorem}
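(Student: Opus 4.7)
The plan is to prove Theorem \ref{th1} by the complex interpolation method of Thorin, adapted to the change-of-measures setting. First, by the homogeneity axiom (iii) I would normalize $\|f\|_{p_t,\mu_{s(t)}}=1$, and by density of simple functions in $L^{p_t}(\mu_{s(t)})$ it suffices to prove the bound when $f$ is simple. Dualizing, the claim reduces to bounding $\int_Y |Tf|\,g\,d\nu_{r(t)}$ uniformly over simple nonnegative test functions $g$ with $\|g\|_{q_t',\nu_{r(t)}}=1$, where $q_t'$ is the conjugate exponent of $q_t$.

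Given a simple function $f=\sum_k c_k e^{i\theta_k}\chi_{A_k}$ with $c_k>0$ and the $A_k$ disjoint, I would construct an analytic family $\{f_z\}_{z\in\overline S}$ on the closed strip $\overline S=\{0\le\operatorname{Re} z\le 1\}$ of the form
$$f_z(x)=\sum_k c_k^{\gamma(z)}\,e^{i\theta_k}\,\chi_{A_k}(x)\,\alpha_0(x)^{A(z)}\alpha_1(x)^{B(z)},$$
with $\gamma, A, B$ affine in $z$ chosen so that $f_t=f$ and so that $\|f_{iy}\|_{p_0,\mu_0}=1$ and $\|f_{1+iy}\|_{p_1,\mu_1}=1$ for every real $y$; the defining relations for $p_t$ and $s(t)$ determine these exponents uniquely, as a short algebraic check confirms (the key identities are $\gamma(0)=p_t/p_0$, $\gamma(1)=p_t/p_1$, together with two analogous pairs for $A$ and $B$ coming from matching powers of $\alpha_0$ and $\alpha_1$). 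I would construct an analogous family $\{g_z\}$ for the test function $g$, absorbing the densities of $\nu_0,\nu_1,\nu_{r(t)}$, so that $g_t=g$ realizes the pairing $\int fg\,d\nu_{r(t)}=\int f_z g_z\,d\nu$ at $z=t$ while $\|g_{iy}\|_{q_0',\nu_0}=\|g_{1+iy}\|_{q_1',\nu_1}=1$. Setting
$$\Phi(z)=\int_Y Tf_z(y)\cdot g_z(y)\,d\nu(y),$$
the endpoint hypotheses combined with Hölder's inequality give $|\Phi(iy)|\le K_0$ and $|\Phi(1+iy)|\le K_1$, while $\Phi$ is continuous on $\overline S$ and of admissible growth. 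Hadamard's three-lines lemma then yields $|\Phi(t)|\le K_0^{1-t}K_1^t$, and taking a supremum over admissible $g$ produces the desired bound on $\|Tf\|_{q_t,\nu_{r(t)}}$.

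The main technical obstacle is that $T$ is only sublinear, so $z\mapsto Tf_z$ is not literally analytic and $\Phi$ is not automatically holomorphic in the open strip. I would handle this by the standard Stein--Weiss device: pass to the upper semicontinuous function $\log|\Phi|$ and verify the subharmonic three-lines hypothesis directly, using the finite representation of $f_z$ (there are only finitely many "profiles" indexed by $k$) together with the pointwise sublinearity bound (ii); equivalently, one freezes the phase $\operatorname{sgn}(Tf)$ at $z=t$ and absorbs it into $g_z$, reducing to an integrand that depends analytically on $z$ through the explicit exponents of $c_k$, $\alpha_0$, $\alpha_1$, $\beta_0$, $\beta_1$. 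The rest of the argument is exponent bookkeeping, which the relations defining $p_t,q_t,s(t),r(t)$ are designed precisely to make consistent.
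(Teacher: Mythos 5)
The paper itself does not prove Theorem \ref{th1}: it is quoted from Stein and Weiss \cite{SW}, so your attempt has to be measured against the classical argument. For a \emph{linear} $T$ your outline is essentially that argument, and your exponent bookkeeping is correct: with $f$ simple, the affine functions determined by $\gamma(0)=p_t/p_0$, $\gamma(1)=p_t/p_1$, $A(0)=-s/p_0$, $B(0)=s/p_0$, $A(1)=(1-s)/p_1$, $B(1)=(s-1)/p_1$ do satisfy $A(t)=B(t)=0$ and $\gamma(t)=1$, so $f_t=f$, the endpoint normalizations hold, and the three-lines lemma gives the constant $K_0^{1-t}K_1^{t}$. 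Two points you pass over are not automatic but are fillable: since $\alpha_0^{A(z)}\alpha_1^{B(z)}$ is not a simple function, neither the holomorphy nor the admissible growth of $\Phi(z)=\int_Y Tf_z\,g_z\,d\nu$ follows from the ``finitely many profiles'' of $f$; one needs a Morera--Fubini argument (or, equivalently, to run the proof as Stein interpolation for the analytic family $T_z=M_{\beta_0^{\cdot}\beta_1^{\cdot}}\,T\,M_{\alpha_0^{A(z)}\alpha_1^{B(z)}}$), using the endpoint bounds and H\"older to dominate the integrand uniformly on the closed strip.

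The genuine gap is the sublinear case, which is exactly the generality in which the theorem is stated and in which this paper uses it (the first step of the proof of Theorem \ref{main} and the Remark about the maximal function are claimed for sublinear operators). Your device for sublinearity does not work: freezing the phase of $Tf$ at $z=t$ and absorbing it into $g_z$ does not produce an integrand analytic in $z$, because $f_z$ itself moves with $z$ and a merely sublinear $T$ gives no pointwise or algebraic control of $z\mapsto Tf_z$; for the same reason $\log|\Phi|$ has no claim to subharmonicity --- that property is a consequence of holomorphy of $\Phi$, which is precisely what sublinearity destroys, and the finite representation of $f$ does not rescue it since the densities enter through genuinely non-simple factors. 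Calling this ``the standard Stein--Weiss device'' is a mischaracterization: the sublinear case in \cite{SW} (which builds on Calder\'on--Zygmund's note on interpolation of sublinear operations) is not obtained by a three-lines argument applied to a nonlinear $\Phi$. Note also that the obvious real-variable shortcut fails to give the stated inequality: H\"older yields $\|Tf\|_{q_t,\nu_{r(t)}}\le\|Tf\|_{q_0,\nu_0}^{1-t}\|Tf\|_{q_1,\nu_1}^{t}\le K_0^{1-t}K_1^{t}\|f\|_{p_0,\mu_0}^{1-t}\|f\|_{p_1,\mu_1}^{t}$, and when $p_0\ne p_1$ the product $\|f\|_{p_0,\mu_0}^{1-t}\|f\|_{p_1,\mu_1}^{t}$ dominates (and can be much larger than) $\|f\|_{p_t,\mu_{s(t)}}$, so the sharp sublinear statement requires an additional idea. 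As written, your argument proves Theorem \ref{th1} only for linear $T$.
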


In addition to the previous theorem we need also the following proved in \cite{MBKn}:
\begin{theorem}
\label{th2}
If the $A_{\infty}$ norm of a weight $w$ is small, i.e. $[w]_{A_{\infty}}\leq1+\delta<2,$ then the function $f=\log w$, and any cube $Q$ satisfy
$$\frac1{|Q|}\int_{Q}|f-f_{Q}|dx\leq 32\sqrt{\delta}.$$
\end{theorem}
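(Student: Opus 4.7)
The plan is to unfold the $A_\infty$ hypothesis in exponential form and then apply a one-sided Taylor estimate for $e^x$. Setting $f = \log w$ and $g = f - f_Q$, the assumption $[w]_{A_\infty} \leq 1+\delta$ is equivalent, for every cube $Q$, to
$$\frac{1}{|Q|}\int_{Q} e^{g}\,dx \leq 1+\delta,$$
while by construction $g_Q = 0$.

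The key step is the elementary inequality
$$e^{x} \geq 1 + x + \tfrac{1}{2}\, x_+^{2}, \qquad x \in \mathbf R,$$
where $x_+ = \max(x,0)$. For $x \geq 0$ this is the standard quadratic Taylor lower bound, and for $x < 0$ it reduces to the tangent line $e^x \geq 1 + x$. Averaging over $Q$ and using $g_Q = 0$ produces
$$1+\delta \;\geq\; \frac{1}{|Q|}\int_{Q} e^{g}\,dx \;\geq\; 1 + \frac{1}{2|Q|}\int_{Q} g_+^{2}\,dx,$$
so $\tfrac{1}{|Q|}\int_{Q} g_+^{2}\,dx \leq 2\delta$. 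An application of Cauchy-Schwarz then gives $\tfrac{1}{|Q|}\int_{Q} g_+\,dx \leq \sqrt{2\delta}$.

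Finally, $g_Q = 0$ forces $\int_Q g_+ = \int_Q g_-$, hence $\int_Q |g| = 2\int_Q g_+$, and we conclude
$$\frac{1}{|Q|}\int_{Q} |f - f_Q|\,dx \;\leq\; 2\sqrt{2\,\delta},$$
which is comfortably stronger than $32\sqrt{\delta}$. There is no real obstacle here; the only subtlety worth noting is that an inequality of the shape $e^x \geq 1 + x + x^2/2$ fails for $x<0$, so it is essential to use $x_+^{2}$ rather than $x^{2}$ on the right-hand side of the Taylor bound. The symmetric argument (using $e^{-g}$ and the $A_\infty$ condition for $w^{-1}$) would be unavailable without extra work, which is why it pays to extract the estimate from a single exponential moment.
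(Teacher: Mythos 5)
Your proof is correct. The unfolding of the hypothesis is exactly right: with $g=f-f_Q$, the definition of $[w]_{A_\infty}$ gives $\frac{1}{|Q|}\int_Q e^{g}\,dx\leq 1+\delta$ with $g_Q=0$ (and $g\in L^1(Q)$, $e^g\in L^1(Q)$, so all the integrals you manipulate are legitimate). The pointwise bound $e^{x}\geq 1+x+\tfrac12 x_+^{2}$ holds as you say, the averaging kills the linear term, Cauchy--Schwarz gives $\frac{1}{|Q|}\int_Q g_+\leq\sqrt{2\delta}$, and the mean-zero identity $\int_Q|g|=2\int_Q g_+$ finishes with the bound $2\sqrt{2\delta}$, which indeed implies the stated $32\sqrt{\delta}$. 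Note, however, that the paper does not prove Theorem \ref{th2} at all: it is quoted from Korey's paper \cite{MBKn}, where it arises inside a more elaborate study of asymptotically sharp forms of doubling, absolute continuity and BMO conditions (whence the generous constant $32$). So your argument is not "the paper's proof done differently" but a short, self-contained replacement for the citation; what it buys is an elementary two-line mechanism (one-sided Taylor bound plus the symmetry of a mean-zero function) and a better constant $2\sqrt{2}$, while Korey's machinery buys sharper asymptotic information that is not needed for the way Theorem \ref{th2} is used here. Your closing caveats are also well placed: $e^x\geq 1+x+x^2/2$ genuinely fails for $x<0$, and the $A_\infty$ condition gives no control on $\int_Q e^{-g}$, so working from the single exponential moment is the right move.
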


Our purpose is to generalize the following theorem proved  in \cite{NV}:

\begin{theorem}
\label{th3}
Let $T$ be a linear operator such that for some $1<p<\infty$, 

$$\|T\|_{L^{p}(w)\rightarrow L^{p}(w)}\leq cF([w]_{A_{p}}),$$
for any $A_{p}$ weight w in $\mathbf R^{n}$, where $F$ is an increasing function and $c$ is an absolute constant. Then:

$$\lim_{[w]_{A_{p}}\to1}\|T\|_{L^{p}(w)\rightarrow L^{p}(w)}=\|T\|_{L^{p}\rightarrow L^{p}}.$$
\end{theorem}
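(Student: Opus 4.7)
The plan is to sandwich $\|T\|_{L^p(w)\to L^p(w)}$ between two quantities both tending to $\|T\|_{L^p\to L^p}$ as $[w]_{A_p}\to 1$, using the interpolation theorem (Theorem \ref{th1}) twice with carefully chosen auxiliary weights. The first step is to convert smallness of $[w]_{A_p}$ into smallness of a BMO norm: if $[w]_{A_p}\le 1+\delta$ then $[w]_{A_\infty}\le 1+\delta$, so Theorem \ref{th2} gives $f:=\log w$ with $\|f\|_*\le 32\sqrt\delta=:\eta$. By the BMO--$A_p$ correspondence recalled in the introduction, I would pick $\lambda:=c_p/\eta$, where $c_p>0$ depends on $p$ and $n$, so that both auxiliary weights $W_\pm:=e^{\pm\lambda f}$ satisfy $[W_\pm]_{A_p}\le 2$. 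The hypothesis then produces the $\delta$-independent bound $\|T\|_{L^p(W_\pm)\to L^p(W_\pm)}\le cF(2)=:M$.

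For the upper bound I would apply Theorem \ref{th1} with $\mu_0=\nu_0=dx$, $\mu_1=\nu_1=W_+\,dx$, and all exponents equal to $p$ (so $p_t=q_t=p$ and $s(t)=r(t)=t$). A direct computation gives $\mu_t=e^{t\lambda f}\,dx$, hence the choice $t=1/\lambda=\eta/c_p$ identifies $\mu_t$ with $w\,dx$ and Theorem \ref{th1} yields
\[
\|T\|_{L^p(w)\to L^p(w)}\le \|T\|_{L^p\to L^p}^{1-t}M^{t},
\]
whose right-hand side tends to $\|T\|_{L^p\to L^p}$ as $\delta\to 0$.

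For the lower bound I would interpolate on the other side of Lebesgue measure: take $\mu_0=w\,dx$ and $\mu_1=W_-\,dx$. The intermediate measure is $\mu_s=e^{((1-s)-s\lambda)f}\,dx$, which coincides with $dx$ precisely when $s=1/(1+\lambda)$, and Theorem \ref{th1} produces
\[
\|T\|_{L^p\to L^p}\le \|T\|_{L^p(w)\to L^p(w)}^{1-s}M^{s}.
\]
Rearranging and letting $s\to 0$ gives a matching lower bound for $\|T\|_{L^p(w)\to L^p(w)}$, and together with the upper bound this yields the claimed limit.

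The delicate step is the lower bound: Lebesgue measure cannot be obtained as a geometric-mean interpolation between $w\,dx$ and any obvious second $A_p$ weight lying between them, so one has to place an auxiliary weight on the \emph{opposite} side of $dx$ from $w$. This forces the choice $W_-=e^{-\lambda f}$ with $\lambda$ as large as the smallness of $\|\log w\|_*$ permits, which is exactly why the BMO estimate in Theorem \ref{th2} is essential. Once the pair $W_\pm$ is in hand, everything else is routine bookkeeping inside Theorem \ref{th1}.
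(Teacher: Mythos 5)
Your proof is correct, and its first half is exactly the route the authors take: converting $[w]_{A_p}\le 1+\delta$ into $\|\log w\|_{*}\lesssim\sqrt{\delta}$ via Theorem \ref{th2}, writing $w$ as the $t$-th geometric mean of Lebesgue measure and an auxiliary weight $w^{1/t}=e^{\lambda f}$ of uniformly bounded characteristic, and applying Stein--Weiss (Theorem \ref{th1}) to get $\|T\|_{L^p(w)}\le\|T\|_{L^p}^{1-t}(cF(2))^{t}$ with $t\sim\sqrt{\delta}$; this is the same computation that appears in the proof of Theorem \ref{main} (with $w_0=1$) and also yields the stated $(1+c\sqrt{\delta})$ rate. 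Your second half, however, is genuinely different from the paper's. There the inequality $\|T\|_{L^p}\le\liminf\|T\|_{L^p(w)}$ is obtained by a contradiction argument: conjugating $T$ by the multiplications $M_{w^{1/2}w_0^{-1/2}}$, proving a lemma that $d_{*}(w_n,w_0)\to0$ forces $w_n^{-1/2}\to w_0^{-1/2}$ in $L^2(w_0,Q)$, and then using a.e.\ convergence, Fatou's lemma and a density argument --- a step that crucially uses linearity of $T$ (this is why Theorems \ref{th3} and \ref{main} are stated for linear operators, while only the upper estimate is claimed for sublinear ones). You instead interpolate a second time with the reflected weight $W_-=e^{-\lambda f}=w^{-\lambda}$, which lies in $A_p$ with characteristic at most $2$ for the same John--Nirenberg reason as $W_+$, so that $dx$ sits at the interior point $s=1/(1+\lambda)$ of the segment joining $w\,dx$ and $W_-\,dx$; log-convexity of the norm then gives $\|T\|_{L^p}\le\|T\|_{L^p(w)}^{1-s}(cF(2))^{s}$, hence a quantitative lower bound of the same $\sqrt{\delta}$ order. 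This buys something the paper does not claim: the full two-sided continuity at $w=1$ for merely sublinear operators (e.g.\ the maximal function), with an explicit rate from below, and it avoids the technical lemma entirely. The trade-off is that your reflection trick exploits the special center $w_0=1$ (replacing $w$ by $w^{-\lambda}$ stays in $A_p$ because its logarithm is a small BMO function); at a general center $w_0$, as in Theorem \ref{main}, the analogous endpoint $w_0(w/w_0)^{-\lambda}$ is not automatically a controlled $A_p$ weight, which is presumably why the authors resort to the Fatou-type argument in the general case. One small point to keep explicit when writing this up: the unweighted bound $\|T\|_{L^p\to L^p}\le cF(1)<\infty$ needed as the endpoint $K_0$ in your first interpolation comes from taking $w\equiv1$ in the hypothesis, and the choices $t=1/\lambda$, $s=1/(1+\lambda)$ depend only on $\delta$, $n$ and $p$, so both bounds are uniform over all $w$ with $[w]_{A_p}\le1+\delta$, which is what the limit statement requires.
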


It follows from the proof that if $[w]_{A_{p}}\leq1+\delta<2$, then:

$$\|T\|_{L^{p}(w)\rightarrow L^{p}(w)}\leq\|T\|_{L^{p}\rightarrow L^{p}}(1+c\sqrt{\delta}),$$
where $c$ is a constant that depends on the dimension $n$, on the constant $c$ that appears in the original weighted estimate and on $p$.

In order to do that we will define a metric in the $A_{\infty}$ space and this is going to generalize the convergence $[w]_{A_{p}}\to 1$ in the sense that this will be equivalent to the convergence $d_{*}(w,1)\to 0$ in the metric $d_{*}$.

\end{section}

\begin{section}{The $(\mathcal A_{\infty}, d_{*})$ metric space}

Let us observe that if we have any weight $w$, any positive constant $c>0$ and any $1\leq p\leq\infty$, then $[w]_{A_{p}}=[cw]_{A_{p}}$. We define an equivalence relation in $A_{\infty}$ in the following way: for $u, v\in A_{\infty}$ we will write $u\sim v$ if 
and only if there is a positive constant $c$ such that $u=cv$ almost everywhere in $\mathbf R^{n}$. It is trivial to check that this is an equivalence relation and this allows us to define the quotient space:

$$\mathcal A_{\infty}=A_{\infty}\Big/\sim.$$
In the same way we define for $1\leq p<\infty$:

$$\mathcal A_{p}=A_{p}\Big/\sim.$$
For two elements $u, v\in\mathcal A_{\infty}$ we define the distance function $d_{*}$ as:

$$d_{*}(u,v)=\|\log u-\log v\|_{*}.$$
Again it is obvious that all the requirements of a metric are satisfied and the reason for defining the equivalence relation is exactly because we need to have:

$$d_{*}(u,v)=0\Leftrightarrow u\sim v.$$
So we define a metric in $\mathcal A_{\infty}$, going through the $BMO$ space. We can check that for an $A_{p}$ weight $w$, $[w]_{A_{p}}\to 1$ is equivalent to $d_{*}(w,1)\to 0$ and since $\mathcal A_{p}\subset\mathcal A_{\infty},$ the restriction of the $d_{*}$ metric to $\mathcal A_{p}$, makes the class a metric space. 

The following is an obvious remark that gives more informations about this ``new" spaces. It states that small balls around the constant weight $1$, are complete in the $d_{*}$ metric. 

\begin{theorem}
\label{th4}
Consider a closed ball $\bar{B}(1,r)$ of sufficiently small radius $r>0$ and center the weight $1$, in the metric space $(\mathcal A_{\infty}, d_{*})$, i.e. $\bar{B}(1,r)=\{w\in\mathcal A_{\infty}:d_{*}(w,1)\leq r\}$.Then $\bar{B}(1,r)$ is a complete metric space with respect to the metric $d_{*}$.
\end{theorem}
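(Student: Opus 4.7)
The plan is to reduce the problem to the completeness of $BMO$ modulo constants via the logarithm. Define a map
\[
\Phi\colon \mathcal{A}_\infty \longrightarrow BMO/\mathbb{R}, \qquad \Phi([w]) = [\log w].
\]
This is well-defined because $u \sim v$ means $u = cv$ a.e., hence $\log u - \log v$ is a constant, i.e.\ the same class in $BMO/\mathbb{R}$; and it is an isometry by the very definition of $d_{*}$. Since $BMO/\mathbb{R}$ equipped with the norm $\|\cdot\|_{*}$ is a classical Banach space, so in particular complete, the task reduces to showing that the image $\Phi(\bar B(1,r))$ is complete inside $BMO/\mathbb{R}$, for which it suffices to show that this image is \emph{closed} — and in fact, for sufficiently small $r$, equals the entire closed BMO-ball of radius $r$ around $0$.

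Concretely, I would take a Cauchy sequence $\{[w_n]\}$ in $\bar B(1,r)$. Choosing representatives (and replacing each $w_n$ by a constant multiple inside its equivalence class if needed, which changes nothing in $\mathcal{A}_\infty$), set $f_n = \log w_n$. Then $\{f_n\}$ is a Cauchy sequence of BMO functions, and by completeness of $BMO/\mathbb{R}$ there exists $f \in BMO$ with $\|f_n - f\|_{*} \to 0$. The lower semicontinuity of the BMO seminorm (together with the fact that $\|f_n\|_{*} \le r$ for all $n$) gives $\|f\|_{*} \le r$. The candidate limit is $[w] := [e^{f}]$, provided we can verify that $e^{f} \in A_\infty$.

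This last point is exactly where the restriction on $r$ enters, and it is the main (minor) obstacle. By the result recalled in the introduction — for every $f \in BMO$ and every $\lambda \in (0,\, c/\|f\|_{*}]$, the weight $e^{\lambda f}$ lies in some $A_p$ with controlled characteristic — taking $\lambda = 1$ shows that if $r$ is chosen smaller than the John–Nirenberg-type constant $c$, then every $f \in BMO$ with $\|f\|_{*} \le r$ satisfies $e^f \in A_p \subset A_\infty$. In particular, our limit $f$ yields $w := e^{f} \in A_\infty$, and then
\[
d_{*}([w_n], [w]) = \|f_n - f\|_{*} \longrightarrow 0, \qquad d_{*}([w],[1]) = \|f\|_{*} \le r,
\]
so $[w] \in \bar B(1,r)$. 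This establishes completeness. The argument in fact shows the slightly stronger statement that, for $r$ small enough, $\Phi$ maps $\bar B(1,r)$ isometrically \emph{onto} the closed BMO-ball of radius $r$ around the origin of $BMO/\mathbb{R}$, which is a closed subset of a Banach space and hence complete.
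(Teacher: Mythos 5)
Your proposal is correct and takes essentially the same route as the paper: pass to logarithms, use completeness of $BMO$ (modulo constants) to extract a limit $f$, and invoke the John--Nirenberg-type fact that $\|f\|_{*}\leq r<c$ implies $e^{f}\in A_{p}\subset A_{\infty}$ (i.e.\ the choice $\lambda=1$ is admissible), so the limit weight stays in the ball. Your handling of the quotient $BMO/\mathbb{R}$ and the explicit check that the limit lies in $\bar{B}(1,r)$ are slightly more careful than the paper's wording, but the underlying argument is identical.
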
 
\begin{proof}:

Consider a Cauchy sequence $\{w_{n}\}_{n\in\mathbf N}$ in $(\bar{B}(1,r), d_{*})$. This means that the sequence $\{\log w_{n}\}_{n\in\mathbf N}$ is Cauchy in the $BMO$ space. But $BMO$ is Banach and so there is a function $f\in BMO$ such that $\log w_{n}\to f$ in $BMO$ as $n\to\infty$. By the John-Nirenberg inequality we know that there is a dimensional constant $c>0$ such that for all $\lambda\in(0,\frac{c}{\|f\|_{*}}]$ the function $e^{\lambda f}\in A_{2}$. But $|\|\log w_{n}\|_{*}-\|f\|_{*}|\leq\|\log w_{n}-f\|_{*}\to 0$ as $n\to\infty$. Here we use the fact that $w_{n}\in\bar{B}(1,r)$. This means that $\|\log w_{n}-\log 1\|_{*}=\|\log w_{n}\|_{*}\leq r$  and $r$ is sufficiently small. Therefore, the number $\|f\|_{*}$ is small and so the number $\frac{c}{\|f\|_{*}}$ is really big. We are now allowed to choose for $\lambda=1$ and we get that $e^{f}\in A_{2}$ or equivalently there is a weight $w\in A_{2}\subset A_{\infty}$ with $f=\log w$. It is trivial now to see that $d_{*}(w_{n},w)\to 0$ as $n\to \infty$. 

\end{proof}

Of course in the previous theorem, we can replace the $\mathcal A_{\infty}$ space by any of the other $\mathcal A_{p}$ spaces. We should mention that no one of the $\mathcal A_{p}$ spaces is complete. The proof of this fact is very simple. Let us prove that $\mathcal A_{1}$ is not complete by finding a Cauchy sequence in the space that has no limit inside $\mathcal A_{1}$. It will follow that this example works for anyone of the $\mathcal A_{p}$ spaces. Consider a decreasing sequence $-1<r_{n}<0$ with $\lim_{n\to\infty}r_{n}=-1$. Define the $A_{1}$ weights $w_{n}=|x|^{r_{n}}$. Then:

$$d_{*}(w_{r_{n}},w_{r_{m}})=\|r_{n}\log|x|-r_{m}\log|x|\|_{*}=|r_{n}-r_{m}|\|\log|x|\|_{*}$$
and since $r_{n}\to 1$ we see that $\{w_{n}\}_{n\in\mathbf N}$ is Cauchy in $\mathcal A_{1}$, or equivalently the sequence $\{\log w_{n}\}_{n\in\mathbf N}$ is Cauchy in $BMO$. It's limit in the $BMO$ space is obviously the function $f(x)=-\log|x|$. This means that for $w(x)=\frac{1}{|x|}$ we have $d_{*}(w_{n},w)\to 0$ as $n\to \infty$, but since $w$ is not in $L^{1}_{loc}(\mathbf R^{n})$ it can not be an $A_{1}$ weight. So the space $(\mathcal A_{1}, d_{*})$ is not complete. 

Let us also mention the following result in \cite{GJ}, by Garnett and Jones, that helps to understand better when a ball in $(\mathcal A_{p}, d_{*})$ is complete. It states that  for a function $f\in BMO$,

$$dist_{BMO}(f, L^{\infty}):=\inf\{\|f-g\|_{*}:g\in L^{\infty}\}\sim\frac1{\sup\{\lambda>0:e^{\lambda f}\in A_{2}\}}.$$
This means that if we have a Cauchy sequence in $\mathcal A_{p}$, the closer the sequence is to the $L^{\infty}$ space, the more chances it has to have a limit in $\mathcal A_{p}$. 

So now we can try and find the completion of these spaces under the metric $d_{*}$. By definition the completion of $(\mathcal A_{p} , d_{*})$ is the space $\bar{\mathcal A_{p}}$ that consists of the equivalence classes of all Cauchy sequences of $\mathcal A_{p}$. We can identify this space as a subspace of $BMO$. Indeed:

$$\bar{\mathcal A_{p}}=\{ f \in BMO : \exists \{w_{n}\}_{n\in\mathbf N}\subset A_{p}:\lim_{n\to\infty}\|\log w_{n}-f\|_{*}=0\},$$
and we can think of the $\mathcal A_{p}$ class as a subset of $\bar{\mathcal A_{p}}$, by identifying every weight $w$ with it's logarithm, $\log w$, in $BMO$. Since the classical $A_{p}$ spaces form an increasing ``sequence" of the variable $p$ (and of course the same is true for the $\mathcal A_{p}$ spaces), the same is true for this new subspaces of $BMO$, $\bar{\mathcal A_{1}}\subset \bar{\mathcal A_{p}}\subset \bar{\mathcal A_{q}}\subset \bar{\mathcal A_{\infty}}\subset BMO$, for $1\leq p\leq q\leq\infty$. 

They are also $\bf{convex}$ subsets of $BMO$. Indeed, consider $1<p<\infty$, and $f,g \in\bar{\mathcal A_{p}}$. This means that there are sequences $\{w_{n}\}_{n\in\mathbf N},\{v_{n}\}_{n\in\mathbf N}\subset A_{p}$ such that: $f=\lim_{n\to\infty}w_{n}, g=\lim_{n\to\infty}v_{n}$, in $BMO$. Let $0<t<1$ be fixed. We will show that $tf+(1-t)g\in\bar{\mathcal A_{p}}$. For this, we only need to see that $tf+(1-t)g=\lim_{n\to\infty}\log (w_{n}^{t}v_{n}^{1-t})$, in $BMO$, and check using H\"older that the weight $w_{n}^{t}v_{n}^{1-t}\in A_{p}$, for all $n$, since:

$$[w^{t}v^{1-t}]_{A_{p}}\leq[w]_{A_{p}}^{t}[v]_{A_{p}}^{1-t},$$
for all $w,v\in A_{p}$. Thus, $tf+(1-t)g\in\bar{\mathcal A_{p}}$. It is trivial to see now that $\bar{\mathcal A_{\infty}}$ is also a convex subset of $BMO$. For $\bar{\mathcal A_{1}}$ the same holds, since if we have two $A_{1}$ weights, $w,v$, it is trivial to see that $w^{t}v^{1-t}\in A_{1}$ and actually that $[w^{t}v^{1-t}]_{A_{1}}\leq [w]_{A_{1}}^{t}[v]_{A_{1}}^{1-t}$.    

Here, let us observe that for any $1<p<\infty$, we have that $L^{\infty}\subset\bar{\mathcal A_{p}}$. There is a nice result of weighted theory (see\cite{GCRF}) that states the following (we will present the statement only for $A_{2}$): There are dimensional constants $c_{1}, c_{2}>0$, such that for a function $\phi$ in $\mathbf R^{n}$ we have:\newline
a) $e^{\phi}\in A_{2}$ provided $\inf\{\|\phi-g\|_{*}:g\in L^{\infty}\}\leq c_{1}$ and\newline
b) $\inf\{\|\phi-g\|_{*}:g\in L^{\infty}\}\leq c_{2}$ provided $e^{\phi}\in A_{2}$. This means that all functions $f\in BMO$ that satisfy the assumption a, belong to the $\bar{\mathcal A_{2}}$ space. Equivalently, there is a small neighborhood of $L^{\infty}$ inside $BMO$, that lies inside the $\bar{\mathcal A_{2}}$ space. 

We should also mention that since:

$$BLO=\{\alpha\log w:\alpha\geq0, w\in A_{1}\},$$
we can ask the question if the spaces $\bar{\mathcal A_{1}}$, $BLO$ are equal. Let us assume that they are. A classical result of weighted theory is that $BMO=BLO-BLO$. By our assumption we have that $BMO=\bar{\mathcal A_{1}}-\bar{\mathcal A_{1}}$. Now consider a function $f\in BMO$. There are functions $\phi,\psi\in\bar{\mathcal A_{1}}$ such that $f=\phi-\psi$. We know that there are sequences of $A_{1}$ weights $\{\phi_{n}\}_{n\in\mathbf N}, \{\psi_{n}\}_{n\in\mathbf N}$ such that $f=\lim_{n\to\infty}\log \phi_{n}-\lim_{n\to\infty}\log \psi_{n}=\lim_{n\to\infty}\log\phi_{n}\psi_{n}^{-1}$, where the limit is in $BMO$. But $\phi_{n}\psi_{n}^{-1}$ is an $A_{2}$ weight for all $n$. So we get that $\bar{\mathcal A_{2}}=BMO$. But this is obviously false.

Notice that from the argument follows the inclusion, $\bar{\mathcal A_{1}}-\bar{\mathcal A_{1}}\subset\bar{\mathcal A_{2}}.$ Trivially, we have the more general fact, that for any $1<p<\infty$, $\bar{\mathcal A_{1}}+(1-p)\bar{\mathcal A_{1}}\subset \bar{\mathcal A_{p}}$. Also, since we have that $w\in A_{p}\Leftrightarrow w^{1-p^{'}}\in A_{p^{'}}$, we get the equivalence $f\in\bar{\mathcal A_{p}}\Leftrightarrow (1-p^{'})f\in\bar{\mathcal A_{p^{'}}}$. For $p=2$ we have $f\in\bar{\mathcal A_{2}}\Leftrightarrow -f\in\bar{\mathcal A_{2}}$, which means that the $\bar{\mathcal A_{2}}$ class is symmetric with respect to the origin in the $BMO$ space. No other $\bar{\mathcal A_{p}}$ class has this property. Here we should remember the following about power weights. A function of the form $|x|^{\alpha}$ is an $A_{p}$ weight in $\mathbf R^{n}$, if and only if $-n<\alpha<n(p-1)$. The interval for $\alpha$ is symmetric with respect to the origin, if and only if $p=2$. Now we can see that there is a  ``correspondence" between the $\bar{\mathcal A_{2}}$ space and the interval $(-n, n)$.

\end{section}

\begin{section}{A generalized version of theorem \ref{th3}}

Our goal is to prove the following theorem:

\begin{theorem}
\label{main}
Let $T$ be a linear operator such that for some $1<p<\infty$, 

$$\|T\|_{L^{p}(w)\rightarrow L^{p}(w)}\leq cF([w]_{A_{p}}),$$
for any $A_{p}$ weight w in $\mathbf R^{n}$, where $F$ is an increasing function and $c$ is an absolute constant. Fix an $A_{p}$ weight $w_{0}$. Then:

$$\lim_{d_{*}(w,w_{0})\to 0}\|T\|_{L^{p}(w)\rightarrow L^{p}(w)}=\|T\|_{L^{p}(w_{0})\rightarrow L^{p}(w_{0})},$$
and actually from the proof follows that for any sublinear operator satisfying the hypothesis of the theorem we have the estimate:

$$\|T\|_{L^{p}(w)\rightarrow L^{p}(w)}\leq\|T\|_{L^{p}(w_{0})\rightarrow L^{p}(w_{0})}(1+c\delta)$$ 
for all weights $w\in A_{p}$ with $d_{*}(w,w_{0})\leq\delta$, for sufficiently small $\delta$, where $c$ is a positive constant that depends on $p$, on the constant $c$ that appears in the original weighted estimate, on the dimension $n$ and on $[w_{0}]_{A_{p}}$.

\end{theorem}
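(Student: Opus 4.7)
The plan is to apply the Stein--Weiss interpolation theorem (Theorem \ref{th1}) between $L^p(w_0)$ and $L^p(w_1)$ for an auxiliary $A_p$ weight $w_1$ chosen so that $w$ lies on the logarithmic geodesic from $w_0$ to $w_1$. Set $u := w/w_0$, so $\log u = \log w - \log w_0$ satisfies $\|\log u\|_* = d_*(w,w_0) \le \delta$. For a parameter $s \in (0,1)$ to be chosen, put $w_1 := w_0\, u^{1/s}$, so that $w = w_0^{1-s} w_1^{s}$. Applying Theorem \ref{th1} with $p_0 = p_1 = q_0 = q_1 = p$, $\mu_0 = \nu_0 = w_0\,dx$, $\mu_1 = \nu_1 = w_1\,dx$ and $t = s$ (so $p_t = q_t = p$, $s(t) = r(t) = t$ and $\mu_s = \nu_s = w_0^{1-s}w_1^{s}\,dx = w\,dx$) yields
\[
\|T\|_{L^p(w)\to L^p(w)}\;\le\;\|T\|_{L^p(w_0)}^{1-s}\,\|T\|_{L^p(w_1)}^{s}\;\le\;\|T\|_{L^p(w_0)}^{1-s}\,\bigl(cF([w_1]_{A_p})\bigr)^{s}.
\]

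The heart of the argument is a uniform bound on $[w_1]_{A_p}$. Writing $v := u^{1/s}$, one has $\|\log v\|_* = (1/s)\|\log u\|_* \le \delta/s$, so fixing $s = \delta/\eta$ forces $\|\log v\|_* \le \eta$ for a small parameter $\eta = \eta([w_0]_{A_p}, n, p)$ to be determined. The reverse Hölder inequality applied to $w_0 \in A_p \subset A_{\infty}$ supplies an exponent $r = r([w_0]_{A_p}) > 1$ with $\langle w_0^r \rangle_Q^{1/r} \le C \langle w_0 \rangle_Q$; combining this through Hölder with the John--Nirenberg bound $\langle v^{r'} \rangle_Q^{1/r'} \le C\, e^{\langle \log v \rangle_Q}$ (valid once $r'\eta$ is below the JN threshold) yields $\langle w_0 v \rangle_Q \le C\langle w_0 \rangle_Q e^{\langle \log v \rangle_Q}$. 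The identical argument applied to $w_0^{1-p'} \in A_{p'}$ and $v^{1-p'}$ produces
\[
\langle (w_0 v)^{1-p'} \rangle_Q^{p-1} \;\le\; C\,\langle w_0^{1-p'} \rangle_Q^{p-1}\, e^{(p-1)(1-p')\langle \log v\rangle_Q}.
\]
Multiplying the two displays and using the arithmetic identity $(p-1)(1-p') = -1$, the exponential factors cancel exactly and one concludes
\[
[w_1]_{A_p} \;=\; [w_0 v]_{A_p} \;\le\; C([w_0]_{A_p}, n, p),
\]
provided $\eta$ is chosen small enough that both reverse Hölder exponents and the two John--Nirenberg bounds remain simultaneously valid.

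With $M := cF(C([w_0]_{A_p}, n, p))$ now fixed, the interpolation inequality becomes $\|T\|_{L^p(w)} \le \|T\|_{L^p(w_0)}\bigl(M/\|T\|_{L^p(w_0)}\bigr)^{s}$, and the elementary bound $x^{s} \le 1 + c(x)\, s$ for $s$ small, together with $s = \delta/\eta$, produces the claimed estimate $\|T\|_{L^p(w)} \le \|T\|_{L^p(w_0)}(1 + c\delta)$ for all sufficiently small $\delta$, with $c$ depending only on $p$, $n$, $[w_0]_{A_p}$, and the constant in the original weighted estimate. Swapping the roles of $w$ and $w_0$---legitimate because the same computation shows $[w]_{A_p}$ itself is controlled by $[w_0]_{A_p}$ in this regime---yields the matching lower bound $\|T\|_{L^p(w_0)} \le \|T\|_{L^p(w)}(1+c\delta)$ and hence the desired limit. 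The principal obstacle throughout is the $A_p$-stability bound on $w_0 v$: $A_p$ is not preserved under arbitrary multiplicative perturbations of small BMO-logarithm, so the argument must extract the exact cancellation between the John--Nirenberg contributions to the ``numerator'' $\langle w_0 v\rangle_Q$ and the ``denominator'' $\langle (w_0 v)^{1-p'}\rangle_Q^{p-1}$ of the $A_p$-characteristic; everything else reduces to interpolation and small-parameter Taylor estimates.
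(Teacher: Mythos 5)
Your proposal is correct, and it is worth separating its two halves. The first half is essentially the paper's argument: both proofs write $w$ as the geodesic point $w_{0}^{1-s}w_{1}^{s}$ with $s\sim\delta$ and feed this into the Stein--Weiss theorem with $\mu_{0}=\nu_{0}=w_{0}dx$, $\mu_{1}=\nu_{1}=w_{1}dx$; the only difference is how the uniform bound $[w_{1}]_{A_{p}}\leq C_{0}([w_{0}]_{A_{p}},n,p)$ is secured. The paper factors its auxiliary weight as a log-convex combination of $w_{0}^{1+\epsilon}$ (self-improvement of $w_{0}$) and a John--Nirenberg weight $(w/w_{0})^{1/(ts)}$, then uses $[u^{t}v^{1-t}]_{A_{p}}\leq[u]_{A_{p}}^{t}[v]_{A_{p}}^{1-t}$, while you estimate $[w_{0}v]_{A_{p}}$ directly by reverse H\"older for $w_{0}$ and $w_{0}^{1-p'}$ combined with John--Nirenberg for the small-BMO factor and the cancellation $(p-1)(1-p')=-1$; these rest on the same ingredients and give the same conclusion. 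The second half is where you genuinely diverge: the paper obtains $\|T\|_{L^{p}(w_{0})\to L^{p}(w_{0})}\leq\liminf_{d_{*}(w,w_{0})\to0}\|T\|_{L^{p}(w)\to L^{p}(w)}$ only for linear $T$, by conjugating with $M_{w_{0}^{-1/2}w^{1/2}}$, proving an $L^{2}(w_{0},Q)$ convergence lemma for $w_{n}^{-1/2}\to w_{0}^{-1/2}$, extracting an a.e.\ convergent subsequence and applying Fatou, with linearity used precisely to pass from $w_{n}^{-1/2}f\to w_{0}^{-1/2}f$ to $T(w_{0}^{1/2}w_{n}^{-1/2}g)\to Tg$. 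You instead symmetrize: since the step-one constants depend on the base weight only through its $A_{p}$ characteristic, and $[w]_{A_{p}}\leq C([w_{0}]_{A_{p}},n,p)$ uniformly on a small $d_{*}$-ball (a point you correctly flag, and which follows from your same RH--JN computation with $v$ replaced by $w/w_{0}$), the step-one estimate may be applied with $w$ as the base point, giving $\|T\|_{L^{p}(w_{0})}\leq\|T\|_{L^{p}(w)}^{1-s}\,(cF(C_{0}))^{s}$ with $s\sim\delta$ and hence the liminf bound. This is simpler than the paper's route, dispenses with the convergence lemma, and in fact establishes the limit for all sublinear operators satisfying the hypothesis, which is stronger than the theorem as stated (and of course covers the linear case). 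One small caveat: to phrase the reverse inequality additively as $\|T\|_{L^{p}(w_{0})}\leq\|T\|_{L^{p}(w)}(1+c\delta)$ you implicitly need $\log\bigl(cF(C_{0})/\|T\|_{L^{p}(w)}\bigr)$ bounded, i.e.\ a lower bound on $\|T\|_{L^{p}(w)}$; this is harmless --- either keep the multiplicative form and let $s\to0$ to get the liminf statement, or note that the analogous implicit dependence on $\|T\|_{L^{p}(w_{0})}$ is already present in the paper's own $(1+c\delta)$ estimate --- but it deserves a sentence in a final write-up.
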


Here let us mention something that is important. Say that our $A_{\infty}$ weight $w$ is of the ``order" $[w]_{A_{\infty}}<1+\delta$. Then by theorem \ref{th2} we get that $d_{*}(w,1)\leq c\sqrt{\delta}$. Since theorem \ref{main} is going to be a generalization of theorem \ref{th3}, the rate of convergence that we have in both theorems should agree. The above observation explains exactly this.

\begin{remark}
Notice that the second half of the previous theorem is true for the Maximal function (since it is true for all sublinear operators that are bounded in the way described in the theorem), i.e. for all weights $w\in A_{p}$ that are sufficiently close to $w_{0}\in A_{p}$, with $d_{*}(w,w_{0})\leq\delta$:

$$\|M\|_{L^{p}(w)\rightarrow L^{p}(w)}\leq\|M\|_{L^{p}(w_{0})\rightarrow L^{p}(w_{0})}(1+c\delta).$$
It is well-known (see  \cite{B}) that $\|M\|_{L^{p}(w)\rightarrow L^{p}(w)}\leq c[w]_{A_{p}}^{\frac{1}{p-1}}$, which can be used here.  
\end{remark}
 
The argument is really similar to the one given in \cite{NV}. Nevertheless, we are going to present it for the sake of completeness and because there are some small differences that we have to point out.

\begin{proof}:
It consists of two steps. First we will show that for any $\bf sublinear$ operator that satisfies the assumptions of our theorem we have:

$$\|T\|_{L^{p}(w)\rightarrow L^{p}(w)}\leq\|T\|_{L^{p}(w_{0})\rightarrow L^{p}(w_{0})}(1+c\delta)$$ 
for all weights $w\in A_{p}$ with $d_{*}(w,w_{0})\leq\delta$. Let $0<\delta$ be a really small number that we consider to be fixed. Fix also an $A_{p}$ weight $w$, with $d_{*}(w,w_{0})<\delta$. This means that $\|\log\frac{w}{w_{0}}\|_{*}\leq\delta$. We would like to write our weight $w$ as $w=w_{0}^{1-t}W^{t}$, for some small and positive number $t$ (which is going to be like $\delta$), and some weight $W\in A_{p}$. From this expression we can see that $W=\frac{w^{\frac1{t}}}{w_{0}^{\frac1{t}}}w_{0}$. For this, let us consider only the case $p=2$, but the general case is identical to this one. Since $w_{0}\in A_{2}$ we know that there is a small $\epsilon>0$ such that $w_{1}:=w_{0}^{1+\epsilon}\in A_{2}$. Then obviously $w_{0}=w_{1}^{1-s}$ for small $s>0$. To continue, consider the function $f=\log\Big(\frac{w}{w_{0}}\Big)^{\frac1{s}}$. The $BMO$ norm of $f$ is really small since:

$$\|f\|_{*}=\frac{1}{s}d_{*}(w,w_{0})\leq\frac1{s}\delta,$$
and so by the John-Nirenberg inequality we have that for all $\lambda\in(0,\frac{c}{\|f\|_{*}}]$ the function $e^{\lambda f}=\Big(\frac{w}{w_{0}}\Big)^{\frac{\lambda}{s}}\in A_{2}$, where $c$ is a positive constant that depends only on the dimension. If we choose $\lambda=\frac{c_{0}}{\delta}$, $c_{0}>0$ is any constant less than or equal to $sc$, we see that $w_{2}:=\Big(\frac{w}{w_{0}}\Big)^{\frac{c_{0}}{\delta s}}\in A_{2}$, which means that the function:

$$W=\frac{w^{\frac1{t}}}{w_{0}^{\frac1{t}}}w_{0}=w_{1}^{1-s}w_{2}^{s}\in A_{2},$$
where $\delta=c_{0}t$. Here we should mention that the $A_{2}$ norm of $W$ can be chosen to be bounded above by a constant that depends only on $s$. On the other hand, $s$ depends only on the $A_{2}$ norm of $w_{0}$, and this is fixed. With this in mind, let us assume that the $A_{2}$ characteristic of $W$ is bounded above by $4$. The important thing here is that it does not depend on $\delta$.

Now the proof continues like this: Write $\gamma=\|T\|_{L^{p}(w_{0})\rightarrow L^{p}(w_{0})}$. By the interpolation result of Stein and Weiss, theorem \ref{th1}, for $X=Y=\mathcal R^n$, $\mathcal M=\mathcal N=\mathcal L$ and $\mu_{0}=\nu_{0}=w_{0}dx, \mu_{1}=\nu_{1}=Wdx$, where by $\mathcal L$ we denote the $\sigma$-algebra of Lebesgue measurable sets in $\mathbf R^{n}$, we get:

\begin{eqnarray*}
\|T\|_{L^{p}(w)\rightarrow L^{p}(w)}&\leq&\gamma^{1-t}\|T\|_{L^{p}(W)\rightarrow L^{p}(W)}^{t}\\
&\leq&\gamma^{1-t}c^{t}F\Big([W]_{A_{p}}\Big)^{t}\\
&\leq&\gamma^{1-t}c^{t}F(4)^{t}
\end{eqnarray*}
and the right-hand side goes to $\gamma$ as $t\to0^{+}$ or equivalently as $\delta\to0^{+}$. In other words:
$$\limsup_{d_{*}(w,w_{0})\to 0}\|T\|_{L^{p}(w)\rightarrow L^{p}(w)}\leq\|T\|_{L^{p}(w_{0})\rightarrow L^{p}(w_{0})}$$
and in addition we have the desired estimate: 

$$\|T\|_{L^{p}(w)\rightarrow L^{p}(w)}\leq\|T\|_{L^{p}(w_{0})\rightarrow L^{p}(w_{0})}(1+c\delta),$$
where $c$ is a constant depending on $n, p$ and $[w_{0}]_{A_{p}}$, for all weights $w$ in $A_{p}$ that are $\delta$ close to $w_{0}$ in the $d_{*}$ metric.  

\begin{remark}
Notice that the previous calculations show that the set 

$$\{\log w:w\in A_{p}\},$$
is open in $BMO$ for all $1<p<\infty$. To see this fix $w_{0}\in A_{p}$ and choose sufficiently small $\delta>0$. For $f\in BMO$ with $\|f-\log w_{0}\|_{*}\leq\delta$, write $f=\log u$, where $u$ is a positive function. Then follow the previous reasoning in the beginning of the proof, with $w= u$ and write $u=w_{0}^{1-t}W^{t}$, for $0<t<1$. It follows that $W\in A_{p}$ and so $u=w_{0}^{1-t}W^{t}$ is an $A_{p}$ weight, by H\"older's inequality. As we can see, this is exaclty the same argument as before. 
\end{remark}

Now we show that for a $\bf linear$ operator we have the estimate:

$$\|T\|_{L^{p}(w_{0})\rightarrow L^{p}(w_{0})}\leq\liminf_{d_{*}(w,w_{0})\to 0}\|T\|_{L^{p}(w)\rightarrow L^{p}(w)}.$$

Let us assume for simplicity that $p=2$ and that $\|T\|_{L^{2}(w_{0})\rightarrow L^{2}(w_{0})}=1$. Note that other $p'$s can be treated similarly. So far we have proved that:

$$\limsup_{d_{*}(w,w_{0})\to 0}\|T\|_{L^{2}(w)\rightarrow L^{2}(w)}\leq 1$$
and:

$$d_{*}(w,w_{0})\leq\delta<1\Rightarrow\|T\|_{L^{2}(w)\rightarrow L^{2}(w)}\leq 1+c\delta.$$ 
Let $M_{\phi}$ denote the operation of multiplication by $\phi$. To finish the proof of the continuity at $w=w_{0}$ we are going to assume that:

$$\liminf_{d_{*}(w,w_{0})\to 0}\|T\|_{L^{2}(w)\rightarrow L^{2}(w)}=\liminf_{d_{*}(w,w_{0})\to 0}\Big\|M_{w_{0}^{-\frac12}w^{\frac12}}TM_{w_{0}^{\frac12}w^{-\frac12}}\Big\|_{L^{2}(w_{0})\rightarrow L^{2}(w_{0})}<1$$
and get a contradiction.  This means that there is $\tau>0$ small, and a sequence of $A_{2}$ weights $w_{n}$ such that $d_{*}(w_{n},w_{0})\to 0$ as $n\to\infty$ and in addition:

\begin{equation}
\label{ine1}
\|w_{0}^{-\frac12}w_{n}^{\frac12}Tw_{0}^{\frac12}w_{n}^{-\frac12}g \|_{L^{2}(w_{0})}\leq(1-\tau)\|g\|_{L^{2}(w_{0})}
\end{equation}
for all  functions $g\in L^2(w_0)$.

 Fix now any cube $Q$. Here we can make the normalization assumption $\frac1{|Q|}\int_{Q}\frac{w_{n}}{w_{0}}dx=1$ for all $n\in\mathbf N$. We claim two things:\newline
$1^{*}$)$\|w_{n}^{-\frac12}-w_{0}^{-\frac12}\|_{L^{2}(w_{0},Q)}\to 0$ as $n\to\infty$ where by $L^{2}(w_{0},Q)$ we mean the $L^{2}(w_{0})$ norm over $Q$,  and\newline
$2^{*}$) there exists a subsequence $k_{n}$ such that $w_{k_{n}}\to w_{0}$ almost everywhere in the cube $Q$.\newline
Obviously $2^{*}$ follows from $1^{*}$. For a proof of $1^{*}$, see lemma after the end of this proof. Now without loss of generality we can assume that the subsequence is the original sequence $w_{n}$. Notice now that $1^{*}$ implies $\|w_{n}^{-\frac12}f-w_{0}^{-\frac12}f\|_{L^{2}(w_{0},Q)}\to 0$ as $n\to\infty$ for all bounded $f$, and so for $g=fw_{0}^{-\frac12}$, we get $\|T(w_{0}^{\frac12}w_{n}^{-\frac12}g)-Tg\|_{L^{2}(w_{0},Q)}\to 0$ as $n\to\infty$ and this implies that for a subsequence of $w_{n}$ (which again we assume that is the whole sequence), $w_{0}^{-\frac12}w_{n}^{\frac12}Tw_{0}^{\frac12}w_{n}^{-\frac12}g\to Tg$ almost everywhere in the cube $Q$. Now we apply Fatou's lemma in inequality \eqref{ine1} and we get:

$$\Big\|\liminf_{n\to\infty}w_{0}^{-\frac12}w_{n}^{\frac12}Tw_{0}^{\frac12}w_{n}^{-\frac12}g\Big\|_{L^{2}(w_{0},Q)}\leq\liminf_{n\to\infty}\Big\|w_{0}^{-\frac12}w_{n}^{\frac12}Tw_{0}^{\frac12}w_{n}^{-\frac12}g\Big\|_{L^{2}(w_{0},Q)}\leq(1-\tau)\|g\|_{L^{2}(w_{0},Q)}.$$

Here $g=fw_{0}^{-\frac12}$ with bounded $f$ form a dense family in $L^2(w_0, Q)$.
For $g$ from this dense family it follows:

$$\|Tg\|_{L^{2}(w_{0})}\leq(1-\tau)\|g\|_{L^{2}(w_{0})}$$
by letting the cube $Q$ expand to infinity, for $g$ in some dense subclass of $L^{2}(w_{0})$ . By assumption $\|T\|_{L^{2}(w_{0})\rightarrow L^{2}(w_{0})}=1$ and this is how we have our contradiction. 

\end{proof}

All that remains is the following lemma:

\begin{lemma}

Let $w_{0}, w\in A_{2}$ such that $d_{*}(w,w_{0})\leq\epsilon$, where $\epsilon$ is sufficiently small. Let us have a normalization assumption $\frac1{|Q|}\int_{Q}\frac{w}{w_{0}}dx=1$. Then $\|w_{n}^{-\frac12}-w_{0}^{-\frac12}\|_{L^{2}(w_{0},Q)}\leq|Q|^{\frac12}c(\epsilon)^{\frac12}$, where $c(\epsilon)$ goes to $0$ as $\epsilon$ goes to $0$.  

\end{lemma}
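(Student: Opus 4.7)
The plan is to reduce the estimate to a statement about exponentials of a small-BMO function and then invoke the John--Nirenberg inequality in its quantitative, small-norm form. First I would set $f=\log(w/w_{0})$, so that $\|f\|_{*}=d_{*}(w,w_{0})\leq\epsilon$, and rewrite
$$\|w^{-1/2}-w_{0}^{-1/2}\|_{L^{2}(w_{0},Q)}^{2}=\int_{Q}\big(w_{0}^{1/2}w^{-1/2}-1\big)^{2}\,dx=\int_{Q}\big(e^{-f/2}-1\big)^{2}\,dx,$$
so that the task becomes to show that the average of $(e^{-f/2}-1)^{2}$ over $Q$ is of order $c(\epsilon)\to 0$.

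Next I would record the quantitative consequence of John--Nirenberg in the small-norm regime. Put $g=f-f_{Q}$; then $g_{Q}=0$ and $\|g\|_{*}\leq\epsilon$, and the exponential tail bound yields $\tfrac{1}{|Q|}\int_{Q}|g|^{k}\,dx\leq C\,k!\,\epsilon^{k}$ for all $k\geq 1$. Hence, for every fixed $\lambda$ with $|\lambda|\leq 1$ and for $\epsilon$ sufficiently small,
$$\frac{1}{|Q|}\int_{Q}e^{\lambda g}\,dx=1+\sum_{k\geq 2}\frac{\lambda^{k}}{k!}\,\frac{1}{|Q|}\int_{Q}g^{k}\,dx=1+O(\epsilon^{2}),$$
with the $k=1$ term vanishing because $g_{Q}=0$.

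Then I would use the normalization to pin down $f_{Q}$. Since $e^{f}=e^{f_{Q}}e^{g}$ and $\tfrac{1}{|Q|}\int_{Q}e^{f}\,dx=1$, the case $\lambda=1$ above gives $e^{f_{Q}}=(1+O(\epsilon^{2}))^{-1}$, hence $f_{Q}=O(\epsilon^{2})$. Expanding
$$\frac{1}{|Q|}\int_{Q}\big(e^{-f/2}-1\big)^{2}\,dx=\frac{1}{|Q|}\int_{Q}e^{-f}\,dx-\frac{2}{|Q|}\int_{Q}e^{-f/2}\,dx+1,$$
and applying the $\lambda=-1$ and $\lambda=-1/2$ cases of the previous step together with $f_{Q}=O(\epsilon^{2})$, each of the two averages equals $1+O(\epsilon^{2})$, so the whole expression is $O(\epsilon^{2})$. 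This proves the lemma with $c(\epsilon)=O(\epsilon^{2})$.

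The only delicate point is the third paragraph: without control of $f_{Q}$ the factor $e^{-f_{Q}}$ could be far from $1$ and destroy the cancellation between the three terms after expanding the square. The normalization $\tfrac{1}{|Q|}\int_{Q}(w/w_{0})\,dx=1$ exists precisely to force $f_{Q}$ to be of the finer order $\epsilon^{2}$, rather than the naive $\epsilon$, via Jensen together with the small-BMO estimate on the average of $e^{g}$.
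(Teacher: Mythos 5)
Your proof is correct, but it takes a genuinely different route from the paper's. The paper expands the same square, writing $\frac{1}{|Q|}\|w^{-1/2}-w_{0}^{-1/2}\|_{L^{2}(w_{0},Q)}^{2}=\frac{1}{|Q|}\int_{Q}\frac{w_{0}}{w}+1-\frac{2}{|Q|}\int_{Q}\big(\frac{w_{0}}{w}\big)^{1/2}$, but then kills the cross term with no John--Nirenberg input at all: by Cauchy--Schwarz and the normalization, $\frac{1}{|Q|}\int_{Q}\big(\frac{w_{0}}{w}\big)^{1/2}\geq\big(\frac{1}{|Q|}\int_{Q}\frac{w}{w_{0}}\big)^{-1/2}=1$, so everything reduces to $\frac{1}{|Q|}\int_{Q}\frac{w_{0}}{w}-1\leq\big[\tfrac{w}{w_{0}}\big]_{A_{2}}-1$, and the paper then quotes the fact that small $\|\log(w/w_{0})\|_{*}$ forces $\big[\tfrac{w}{w_{0}}\big]_{A_{2}}\leq 1+c(\epsilon)$. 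You instead inline that fact: you control both terms of the expansion directly via the John--Nirenberg moment bounds for $g=f-f_{Q}$, integrate the exponential series term by term for $|\lambda|\leq 1$, and use the normalization not for Cauchy--Schwarz but to pin down $f_{Q}=O(\epsilon^{2})$. Both arguments rest on the same small-norm John--Nirenberg phenomenon; the paper's version is shorter because the cross term is free, while yours is more self-contained and quantitative, yielding the explicit rate $c(\epsilon)=O(\epsilon^{2})$ (consistent with the $\delta\leftrightarrow\sqrt{\delta}$ relation in Theorem \ref{th2}) where the paper only asserts $c(\epsilon)\to 0$. Two minor points to keep tidy: the moment bound should read $\frac{1}{|Q|}\int_{Q}|g|^{k}\,dx\leq C\,k!\,(C'\epsilon)^{k}$ with the dimensional constant inside the $k$-th power (your subsequent summation is unaffected for $\epsilon$ small), and the term-by-term integration of $e^{\lambda g}$ should be justified by the resulting absolute convergence $\sum_{k}\frac{1}{k!}\frac{1}{|Q|}\int_{Q}|g|^{k}<\infty$, which that bound supplies.
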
 
\begin{proof}:
We want to estimate the expression:

$$\frac{1}{|Q|}\Big\|w^{-\frac12}-w_{0}^{-\frac12}\Big\|_{L^{2}(w_{0},Q)}^{2}=\frac{1}{|Q|}\int_{Q}\frac{w_{0}}{w}+1-\frac{2}{|Q|}\int_{Q}\Big(\frac{w_{0}}{w}\Big)^{\frac12}.$$
The last integral can be taken care of really easy, since by our normalization assumption and Cauchy-Schwartz we get the following:

$$\frac1{|Q|}\int_{Q}\Big(\frac{w_{0}}{w}\Big)^{\frac12}=\frac{1}{|Q|}\int_{Q}\Big(\frac{w}{w_{0}}\Big)^{-\frac12}\geq\Big(\frac{1}{|Q|}\int_{Q}\Big(\frac{w}{w_{0}}\Big)^{\frac12}\Big)^{-1}\geq\Big(\frac1{|Q|}\int_{Q}\frac{w}{w_{0}}\Big)^{-\frac12}=1.$$
Therefore, the quantity that we need to estimate is bounded above by:

$$\frac{1}{|Q|}\Big\|w^{-\frac12}-w_{0}^{-\frac12}\Big\|_{L^{2}(w_{0},Q)}^{2}\leq\frac{1}{|Q|}\int_{Q}\frac{w_{0}}{w}-1.$$
It is time to use the fact that $d_{*}(w,w_{0})\leq\epsilon$. We get that the weight $\frac{w}{w_{0}}$ is in the $A_{2}$ class and actually because the $BMO$ norm of $\log\Big(\frac{w}{w_{0}}\Big)$ is really small, the $A_{2}$ characteristic is bounded by $1+c(\epsilon)$, where $c(\epsilon)$ is a constant that goes to $0$ as $\epsilon$ goes to $0$. So:

$$\frac{1}{|Q|}\Big\|w^{-\frac12}-w_{0}^{-\frac12}\Big\|_{L^{2}(w_{0},Q)}^{2}\leq\Big[\frac{w}{w_{0}}\Big]_{A_{2}}-1\leq c(\epsilon).$$
\end{proof}

\begin{section}{comments and observations}
Let us have a closer look to what the previous theorem tells us. Consider any linear operator $T$ that satisfies the assumptions of theorem \ref{main}. This means that for any $w\in A_{p}$ we have a number $\|T\|_{L^{p}(w)\rightarrow L^{p}(w)}$. So we have a map $F_{T}:\mathcal A_{p}\rightarrow \mathbf R$ defined by the formula:

$$F_{T}(w)=\|T\|_{L^{p}(w)\rightarrow L^{p}(w)}.$$
First we should observe that for $w\in\mathcal A_{p}$ this is well defined since $\|T\|_{L^{p}(w)\rightarrow L^{p}(w)}=\|T\|_{L^{p}(cw)\rightarrow L^{p}(cw)}$ for all $w\in A_{p}$ and all positive constants $c>0$. By theorem \ref{main} we have that this map is continuous, since:

$$\lim_{d_{*}(w,w_{0})\to 0}|F_{T}(w)-F_{T}(w_{0})|=0$$
for all weights $w,w_{0}\in\mathcal A_{p}$. 

In \cite{NV} the authors showed that for the Hilbert transform, $H$, in $\mathbf S^{1}$ we have that for all sufficiently small $\delta$'s, there is a weight $w\in A_{2}$, with the properties that $[w]_{A_{2}}\leq 1+\delta<2$ and:

$$c\sqrt{\delta}\leq\|H\|_{L^{2}(w)\rightarrow L^{2}(w)}-1.$$
This means that theorem \ref{th3} is sharp for $p=2$. This is true also for the Hilbert transform in the line and for the martingale transform. If we use the observation made exactly after the statement of theorem \ref{main}, we see that the rate of convergence in theorem \ref{main} is also sharp. It is a good point to mention that there are interesting operators like the Riesz projection $P_{+}$ in $\mathbf S^{1}$, that converge faster to their $L^{2}(dx)$ norm than the previous mentioned operators (see \cite{NV}). Namely, there is universal constant $c>0$ such that for all weights $[w]_{A_{2}}\leq 1+\delta<2$, we have:

$$\|P_{+}\|_{L^{2}(w)\rightarrow L^{2}(w)}-1\leq c\delta.$$
In addition, there is a universal constant $c_{1}>0$, such that for all sufficiently small $\delta$'s, there is weight $w\in A_{2}$ with the properties $[w]_{A_{2}}\leq1+\delta$ and:

$$c_{1}\delta\leq\|P_{+}\|_{L^{2}(w)\rightarrow L^{2}(w)}-1.$$
We should also mention that in the proof of theorem \ref{main}, there is only one time (namely in the second step) that we really need to use the fact that our operator is linear in order to get that:

$$\|T\|_{L^{p}(w_{0})\rightarrow L^{p}(w_{0})}\leq\liminf_{d_{*}(w,w_{0})\to 0}\|T\|_{L^{p}(w)\rightarrow L^{p}(w)}.$$
It is used when we claim that the convergence $\|w_{n}^{-\frac12}f-w_{0}^{-\frac12}f\|_{L^{2}(w_{0},Q)}\to 0$ as $n\to\infty$, implies the convergence $\|T(w_{0}^{\frac12}w_{n}^{-\frac12}f)-Tf\|_{L^{2}(w_{0},Q)}\to 0$ as $n\to\infty$.

By \cite{Hy}, \cite{TCSA}, we know that for any Calder\'on-Zygmund operator $T$, any $1<p<\infty$, and any $A_{p}$ weight w, we have the estimate:

$$\|T\|_{L^{p}(w)\rightarrow L^{p}(w)}\leq c[w]_{A_{p}}^{max\{1,\frac{1}{p-1}\}},$$
where $c$ is a universal constant that does not depend on the weight, and so we see that theorem \ref{main} can be applied for this class of operators, since the function $F$ that appears in the statement of this theorem can be chosen to be equal to $F(x)=x^{max\{1,\frac{1}{p-1}\}}$.
\end{section}

\end{section}

Alexander Volberg, volberg@math.msu.edu, Nikolaos Pattakos, pattakos@msu.edu;\newline
Department of Mathematics, Michigan State University, East Lansing, Michigan 48824, USA;

\end{document}